\theoremstyle{plain}
\newtheorem{theorem}{Theorem}[section]
\newtheorem{cor}[theorem]{Corollary}
\newtheorem{lemma}[theorem]{Lemma}
\theoremstyle{definition}
\newtheorem{defn}[theorem]{Definition}
\newtheorem{remark}[theorem]{Remark}
\newtheorem{example}[theorem]{Example}
\newtheorem{step}{Step}
\numberwithin{equation}{section}
\newcommand{\Q}{\mathbb Q}
\newcommand{\R}{\mathbb R}
\newcommand{\F}{\mathbb F}
\newcommand{\Z}{\mathbb Z}
\newcommand{\bv}{\mathbf v}
\newcommand{\la}{\lambda}
\DeclareMathOperator{\Span}{\ensuremath{span}}
\begin{document}

\title{Recovering affine-linearity of functions from their restrictions
to affine lines}

\author[Apoorva Khare]{Apoorva Khare*}\thanks{*Corresponding author: Apoorva
Khare, Indian Institute of Science, Bengaluru -- 560012, Karnataka, India}
\address[A.~Khare]{Indian Institute of Science, Bangalore 560012, India;
and Analysis and Probability Research Group; Bangalore 560012, India}
\email{\tt khare@iisc.ac.in}

\author{Akaki Tikaradze}
\address[A.~Tikaradze]{Department of Mathematics, University of Toledo,
Toledo 43606, USA}
\email{\tt tikar06@gmail.com}

\keywords{Affine linear maps, concatenation, field automorphism, Sidon
set, $B_h$ set}

\subjclass[2010]{15A03 (primary), 13C10 (secondary)}

\date{30th December, 2022}

\begin{abstract}
Motivated by recent results of
Tao--Ziegler [\textit{Discrete Anal.}\ 2016]
and Greenfeld--Tao (2022 preprint)
on concatenating affine-linear functions along subgroups of an abelian
group, we show three results on recovering affine-linearity of functions
$f : V \to W$ from their restrictions to affine lines, where $V,W$ are
$\mathbb{F}$-vector spaces and $\dim V \geqslant 2$. First, if $\dim V <
|\mathbb{F}|$ and $f : V \to \mathbb{F}$ is affine-linear when restricted
to affine lines parallel to a basis and to certain ``generic'' lines
through $0$, then $f$ is affine-linear on $V$. (This extends to all
modules $M$ over unital commutative rings $R$ with large enough
characteristic.) Second, we explain how a classical result attributed to
von Staudt (1850s) extends beyond bijections: if $f : V \to W$ preserves
affine lines $\ell$, and if $f(v) \not\in f(\ell)$ whenever $v \not\in
\ell$, then this also suffices to recover affine-linearity on $V$, but up
to a field automorphism. In particular, if $\mathbb{F}$ is a prime field
$\mathbb{Z}/p\mathbb{Z}$ ($p>2$) or $\mathbb{Q}$, or a completion
$\mathbb{Q}_p$ or $\mathbb{R}$, then $f$ is affine-linear on $V$.

We then quantitatively refine our first result above, via a weak
multiplicative variant of the additive $B_h$-sets initially explored by
Singer [\textit{Trans.\ Amer.\ Math.\ Soc.}\ 1938],
Erd\"os--Tur\'an [\textit{J.\ London Math.\ Soc.}\ 1941], and
Bose--Chowla [\textit{Comment.\ Math.\ Helv.}\ 1962].
Weak multiplicative $B_h$-sets occur inside all rings with large enough
characteristic, and in all infinite or large enough finite integral
domains/fields. We show that if $R$ is among any of these classes of
rings, and $M = R^n$ for some $n \geqslant 3$, then one requires
affine-linearity on at least $\binom{n}{\lceil n/2 \rceil}$-many generic
lines to deduce the global affine-linearity of $f$ on $R^n$.
Moreover, this bound is sharp.
\end{abstract}

\maketitle

\section{Introduction and main results}

The goal of this short note is to elucidate some classification results
for functions which preserve affine lines in a vector space. We were
motivated to work on these by a recent blogpost of Tao \cite{T}, where he
discusses a preprint with Greenfeld \cite{GT}. Specifically, Tao writes
the following result:

\begin{theorem}[\cite{T}]\label{Tblog}
Let $F : \R^2 \to \R$ be a smooth function which is affine-linear on
every horizontal line, diagonal (line of slope $1$), and anti-diagonal
(line of slope $-1$). In other words, for any $c \in \R$, the functions
\begin{equation}\label{ETao}
x \mapsto F(x,c), \quad x \mapsto F(x,c+x), \quad x \mapsto F(x,c-x),
\end{equation}
are each affine functions on $\R$. Then $F$ is an affine function on
$\R^2$.

In fact, (a)~the smoothness hypothesis is not necessary; and (b)~this
result also holds when $\mathbb{R}$ is replaced by a finite field $\Z / p
\Z$ with $p$ odd.
\end{theorem}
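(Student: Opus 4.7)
The plan is to use the three families of constraints in sequence to pin down the algebraic form of $F$, with no smoothness needed at any stage.

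First, the horizontal-line condition lets us write $F(x,y) = A(y)\, x + B(y)$ pointwise, for some functions $A, B : \R \to \R$ (a priori only set-theoretic).

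Second, I would impose affine-linearity along the diagonals $y = x + c$. For each $c$, the map $x \mapsto F(x, x+c) = A(x+c)\, x + B(x+c)$ must be affine in $x$. Substituting $t := x+c$ turns the right-hand side into $A(t)\, t + B(t) - c\, A(t)$, which must then be affine in $t$ for every $c$. Subtracting the $c=0$ case from the $c=1$ case shows $A$ itself is affine, say $A(t) = \alpha t + \beta$; then $\alpha t^2 + \beta t + B(t)$ being affine forces $B(t) = -\alpha t^2 + \gamma t + \delta$. At this stage $F(x,y) = \alpha xy + \beta x - \alpha y^2 + \gamma y + \delta$.

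Third, I would impose affine-linearity along the anti-diagonals $y = -x+c$. A direct expansion shows that the coefficient of $x^2$ in $F(x, -x + c)$ equals $-2\alpha$, independently of $c$. Since it must vanish, $2\alpha = 0$; over $\R$ (or any field of characteristic $\ne 2$) this gives $\alpha = 0$, leaving $F(x,y) = \beta x + \gamma y + \delta$, which is affine. The same argument applies verbatim when $\R$ is replaced by $\Z/p\Z$ for $p$ odd, and the assumption $p \neq 2$ enters only at this last step, to cancel the factor of $2$. (In characteristic $2$, moreover, the diagonal and anti-diagonal coincide, so only two conditions remain, and $F(x,y) = xy$ on $\F_2^2$ is a counterexample.)

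The one spot where this strategy could stumble is the diagonal step, where pointwise affine-linearity in $x$ must be converted into a universal algebraic identity in $A$ and $B$; the change of variable $t = x+c$ together with taking differences across two values of $c$ is the trick that carries it through. Everything else is bookkeeping, and notably none of it uses smoothness, continuity, or even the full real structure beyond having characteristic $\ne 2$.
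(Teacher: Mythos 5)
Your proof is correct, and no step uses smoothness; the conversion of pointwise affine-linearity along diagonals into an identity for $A$ and $B$ (via $t=x+c$ and differencing $c=0$ against $c=1$) is sound, and the final step correctly isolates where $2\neq 0$ is needed. Note, however, that the paper never proves this theorem in isolation: it deduces it from the stronger Theorem~\ref{T1}, and the route there is organized differently. The paper changes basis to $e_1=2^{-1}(1,-1)$, $e_2=2^{-1}(1,1)$, so that the diagonal and anti-diagonal families become the ``coordinate'' directions; affine-linearity along all lines parallel to a basis then yields, by a telescoping induction (Lemma~\ref{L} and Equation~\eqref{E3}), the multi-affine form $F = a + b x_1 + c x_2 + d\, x_1 x_2$ in these coordinates, and the cross term $d$ is killed by testing along a \emph{single} radial line $\F(e_1+e_2)$ (the $X$-axis) via a Vandermonde-type argument, where the non-vanishing of $2!=2$ enters. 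You instead keep standard coordinates, use the horizontal and diagonal families as the two ``structural'' directions to pin down $F(x,y)=\alpha xy+\beta x-\alpha y^2+\gamma y+\delta$, and use the anti-diagonal family as the test direction. Your version is more elementary and self-contained for this two-dimensional statement, but it consumes the full family of horizontal lines; the paper's bookkeeping is heavier precisely because it buys the stronger conclusion that one horizontal line through the origin suffices, and that the whole argument extends to free modules over rings of large enough characteristic. One small point worth making explicit in your write-up: over $\Z/p\Z$ the inference ``the $x^2$-coefficient must vanish'' uses that a polynomial function of degree $\leqslant 2$ on a field with at least $3$ elements determines its coefficients, which holds since $p$ is odd. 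Your characteristic-$2$ counterexample $F(x,y)=xy$ is a nice addition not present in the paper.
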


This result was motivated by the preprint \cite{GT} -- see in it the
discussion after the proof of Lemma~9.2. Here the authors say that a
certain function $F : G^2 \to G$, for $G$ a finite cyclic group, becomes
``mostly affine'' along horizontal lines, diagonals, and anti-diagonals
-- and then one can expect to ``concatenate'' this information in the
spirit of \cite[Proposition 1.2]{TZ} to conclude that $F$ is affine:
$F(x,y) = Ax + By + C$. However, the authors show that when one works
over $\Z / 2^n \Z$ for large $n \gg 0$, there is a quadratic correction.
(This discussion and the usage of concatenation go towards providing a
counterexample -- and more -- to the periodic tiling conjecture.)

We begin this note by explaining how the hypotheses in
Theorem~\ref{Tblog} can be further weakened, thereby obtaining a slightly
stronger concatenation-type result. Namely, we claim that
affine-linearity on \textit{all} horizontal lines is not needed, just on
the $X$-axis. To formulate (and show) this claim in greater generality,
first observe that upon working over $\F = \mathbb{R}$ or $\Z / p \Z$ for
$p$ odd, and setting $e_i = 2^{-1} (1, (-1)^i)$ for $i=1,2$, the vectors
$e_1, e_2$ form a basis of $\F^2$. Now $(1,0) = e_1 + e_2$, $e_1$, $e_2$
are the directions in~\eqref{ETao}, and the claim is that along with
affine-linearity along every diagonal $m_0 + \F e_2$ and anti-diagonal
$m_0 + \F e_1$ (i.e.\ for all $m_0 \in \F^2$), one only needs to assume
$F$ is affine-linear along the single horizontal line $\{ (x,0) = x (e_1
+ e_2) : x \in \F \}$.

\begin{remark}\label{R12}
In fact, $e_1 + e_2$ is not special: one can use the single additional
line $\F (c_1 e_1 + c_2 e_2)$ for \textit{any} choice of scalars $c_1,
c_2 \in \F^\times$. Moreover, Equation~\eqref{E3} below reveals that
(a)~the quadratic correction encountered in~\cite{GT} by Greenfeld--Tao
is a special case of a multi-affine correction of degree $n$, when
working over $\F^n$; and
(b)~in fact such a correction occurs over every unital ring $R$ -- not
just over $\F$, and also not only over $\Z / 2^n \Z$ as in~\cite{GT}.
We will also show that this correction vanishes if $R$ has ``large enough
characteristic'' -- e.g.\ over $R^2$, we need $1+1$ to not be a
zerodivisor in $R$.
\end{remark}

As in Remark~\ref{R12}, we now extend the above formulation of our
strengthening of Theorem~\ref{Tblog} from vector spaces $\mathbb{F}^2$
(as above) to all modules $M$, over all unital commutative rings with
``large enough characteristic'', and where the scalars $c_j$ need not be
units but merely non-zerodivisors:

\begin{theorem}\label{T1}
Suppose $R$ is a unital commutative ring, and $M$ is a free $R$-module
with basis $\{ e_i : i \in I \}$. Further assume that $n =1 + \cdots + 1
\in R$ is not a zerodivisor whenever $1 \leqslant n \leqslant |I|$.

For every finite subset $J \subseteq I$, fix vectors $m_J := \sum_{j \in
J} c_j^J e_j \in M$, where every $c_j^J \in R$ is a non-zerodivisor. Now
suppose $f : M \to R$ is any map such that the restrictions of $f$ to the
lines
\begin{alignat*}{3}
&\ m_0 + R e_i = \{ m_0 + r e_i : r \in R \}, & \quad & \forall m_0 \in
M, \ i \in I & \\
\text{and} \quad &\ R m_J = \{ r m_J : r \in R \}, & & \forall J
\subseteq I \text{ of finite size } \geqslant 2
\end{alignat*}
are each affine-linear. Then $f$ is affine-linear on $M$.

This also holds if $R$ is an integral domain that is infinite or else
finite with $\dim M < |R|$.
\end{theorem}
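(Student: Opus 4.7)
The plan is to reduce to a function $G$ that vanishes on the origin and on each coordinate axis, and then prove $G \equiv 0$ by induction on the number of nonzero coordinates of the argument. First I would subtract from $f$ both the constant $f(0)$ and the unique $R$-linear map $L \colon M \to R$ determined on the basis by $L(e_i) = f(e_i) - f(0)$. The resulting function $G := f - L - f(0)$ still satisfies the two affine-linearity hypotheses, and moreover $G(0) = 0$; since $G$ is affine on each line $R e_i$ with $G(0) = G(e_i) = 0$, this forces $G(r e_i) = 0$ for every $r \in R$ and $i \in I$. It then suffices to show $G \equiv 0$.

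The core of the argument is an induction on $k = |\mathrm{supp}(v)|$, the number of nonzero coordinates of $v$. The cases $k \leq 1$ are immediate from the reduction. For $k \geq 2$, suppose $G$ vanishes on all vectors of support size $\leq k-1$. Given $J = \{i_1, \ldots, i_k\} \subseteq I$ and $v = \sum_{j=1}^k r_j e_{i_j}$, I would ``peel off'' one coordinate at a time using axis-parallel affine-linearity: on the line $(v - r_k e_{i_k}) + R e_{i_k}$ the function $G$ is affine in the parameter and vanishes at the point $v - r_k e_{i_k}$ by the inductive hypothesis (support size $k-1$), which forces $G(v) = r_k \cdot G(v - r_k e_{i_k} + e_{i_k})$. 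Iterating this reduction through all $k$ coordinates in turn produces the key identity
\begin{equation*}
G\Bigl(\sum_{j=1}^k r_j e_{i_j}\Bigr) \;=\; r_1 r_2 \cdots r_k \cdot Z_J, \qquad Z_J := G\Bigl(\sum_{j=1}^k e_{i_j}\Bigr).
\end{equation*}

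The final step uses the single extra line $R m_J$ to force $Z_J = 0$. Linearity of $G$ on $R m_J$ (available because $G(0) = 0$) gives $G(r m_J) = r \cdot G(m_J) = r \prod_j c_j^J \cdot Z_J$, while applying the key identity to $r m_J = \sum_j (r c_j^J) e_{i_j}$ gives $G(r m_J) = r^k \prod_j c_j^J \cdot Z_J$. Cancelling the non-zerodivisor $\prod_j c_j^J$ leaves
\begin{equation*}
(r^k - r)\, Z_J \;=\; 0 \qquad \text{for every } r \in R.
\end{equation*}
For the general-ring statement, I would extract the sharp multiple $k!$ by a $k$-th forward finite difference: since the leading term of $p(x) = x^k - x$ is $x^k$, one has $\sum_{j=0}^k (-1)^{k-j} \binom{k}{j} p(j) = k!$ identically, and multiplying each summand by $Z_J$ yields $k! \cdot Z_J = 0$. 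The hypothesis that each integer $1, 2, \ldots, k \leq |I|$ is a non-zerodivisor in $R$ then makes $k!$ a non-zerodivisor, and hence $Z_J = 0$. For the integral-domain variant the finite-difference step is unnecessary: the polynomial $x^k - x$ has at most $k$ roots in $R$, and the hypotheses ensure either $|R| = \infty$ or $|R| > k$, so picking any $r \in R$ with $r^k - r \neq 0$ and using that $R$ is a domain again forces $Z_J = 0$.

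The main obstacle I anticipate is precisely this last step: extracting the sharp factor $k!$ from the family of constraints $(r^k - r) Z_J = 0$ rather than settling for the cruder $2^k - 2$ that one obtains by simply taking $r = 2$. The finite-difference calculus is what matches the ``large characteristic'' hypothesis exactly; by contrast, the algebraic reduction to $G$ and the inductive peeling of coordinates are comparatively routine.
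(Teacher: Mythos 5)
Your proof is correct, and it shares the paper's overall strategy --- use the axis-parallel lines to show $f$ is a multi-affine polynomial in the coordinates, then use one radial line per support set $J$ together with a non-zerodivisor argument to kill every coefficient of degree $\geqslant 2$ --- but it executes both halves differently enough to be worth comparing. You normalize to $G = f - L - f(0)$ first and run a single induction on support size, so that by the time you reach support $J$ of size $k$ all lower-order coefficients are already gone and the radial line $Rm_J$ yields the clean two-term relation $(r^k - r)Z_J = 0$; the paper instead proves the full expansion~\eqref{E3} for arbitrary $f$ and arbitrary base point $m_0$, obtains a whole polynomial identity $a_1 r + \cdots + a_n r^n = 0$ from each radial line, and then kills the coefficients $\Psi_{J'}^{(0)}$ by a secondary induction on $|J'|$ (your $Z_J$ is exactly $\Psi_J^{(0)}$ for the normalized $G$). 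To extract $Z_J = 0$ you take a $k$-th finite difference of $x^k - x$, which needs only that $k!$ is a non-zerodivisor; the paper evaluates at $r = 1, \dots, n$ and pre-multiplies by the adjugate of an ``essentially Vandermonde'' matrix, needing the non-zerodivisor $\prod_{i=1}^{n} i!$. Both extractions are covered by the hypotheses, but yours is marginally leaner and avoids the determinant computation. Your handling of the integral-domain case (choose $r$ with $r^k \neq r$, possible since $x^k - x$ has at most $k$ roots and $|R| > \dim M \geqslant k$) is the direct analogue of the paper's Remark~\ref{Rnzd}. I see no gaps.
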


As seen below, the proof of Theorem~\ref{T1} works for arbitrary
$R$-modules -- and so in fact:

\begin{cor}
In the case of $R$ with each $n \leqslant |I|$ not a zerodivisor,
Theorem~\ref{T1} holds for all $R$-modules $M$, with ``basis'' replaced
by ``generating set''.
\end{cor}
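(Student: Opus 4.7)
The plan is to reduce the general $R$-module case to the free case by passing to a canonical presentation. Let $F := \bigoplus_{i \in I} R \tilde{e}_i$ denote the free $R$-module on (copies of) the generating set, and let $\pi : F \to M$ be the $R$-linear surjection with $\pi(\tilde{e}_i) = e_i$. Given $f : M \to R$ satisfying the hypotheses of the corollary, I will show that the pullback $\tilde f := f \circ \pi : F \to R$ satisfies the hypotheses of Theorem~\ref{T1} on $F$, and then deduce affine-linearity of $f$ from that of $\tilde f$.

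For the hypothesis transfer, fix $\tilde m_0 \in F$ and $i \in I$. The line $\tilde m_0 + R \tilde e_i$ is mapped by $\pi$ onto the line $\pi(\tilde m_0) + R e_i$ in $M$, via the parametrisation $r \mapsto \pi(\tilde m_0) + r e_i$. Hence $\tilde f(\tilde m_0 + r \tilde e_i) = f(\pi(\tilde m_0) + r e_i)$ is affine-linear in $r$ by hypothesis. Similarly, for finite $J \subseteq I$ with $|J| \geqslant 2$, setting $\tilde m_J := \sum_{j \in J} c_j^J \tilde e_j$ gives $\tilde f(r \tilde m_J) = f(r m_J)$, which is affine-linear in $r$. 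The non-zerodivisor hypotheses on $n \leqslant |I|$ and on each $c_j^J$ are conditions on $R$ and on the fixed scalars alone, so they carry over unchanged from $M$ to $F$.

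Thus Theorem~\ref{T1} applies to $\tilde f$ and yields $a \in R$ together with an $R$-linear map $L : F \to R$ such that $\tilde f(x) = a + L(x)$ for all $x \in F$. It remains to descend $L$ through $\pi$. Whenever $\pi(\tilde x_1) = \pi(\tilde x_2)$, the definition $\tilde f = f \circ \pi$ forces $\tilde f(\tilde x_1) = \tilde f(\tilde x_2)$, and hence $L(\tilde x_1 - \tilde x_2) = 0$. Therefore $\ker \pi \subseteq \ker L$, so $L$ factors uniquely as $\bar L \circ \pi$ for some $R$-linear $\bar L : M \to R$, and $f(m) = a + \bar L(m)$ for every $m \in M$, proving the corollary.

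There is no real obstacle: all substantive content is already packaged inside Theorem~\ref{T1}, and what remains is a purely formal descent along the surjection $\pi$. The only point worth a moment's attention is in the second paragraph -- verifying that the diagonal lines prescribed in $M$ by the vectors $m_J = \sum_j c_j^J e_j$ lift back to diagonal lines in $F$ with \emph{the same} coefficients $c_j^J$, so that the transferred hypothesis is literally an instance of Theorem~\ref{T1} rather than a stronger variant.
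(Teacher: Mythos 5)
Your proof is correct, but it takes a genuinely different route from the paper's. The paper disposes of the corollary in one line, by observing that the proof of Theorem~\ref{T1} --- the induction establishing~\eqref{E3} and the subsequent Vandermonde argument --- never uses linear independence of the $e_i$, only that they generate $M$; the final identity $f(\sum_i x_i e_i) = f(0) + \sum_i (f(e_i)-f(0))x_i$ then holds for \emph{every} representation of a given element, which is exactly the required affine-linearity. You instead treat Theorem~\ref{T1} as a black box for free modules and reduce to it by pulling $f$ back along the presentation $\pi : F \to M$, checking that the prescribed test lines in $F$ map onto the prescribed test lines in $M$ (so $\tilde f = f \circ \pi$ inherits the hypotheses with the same scalars $c_j^J$), and then descending the linear part through $\pi$ via $\ker \pi \subseteq \ker L$. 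Your reduction is complete; the one point deserving a moment's care --- that affine-linearity of $f$ on a line $m_0 + R e_i \subseteq M$, which may be degenerate if $e_i$ has nonzero annihilator, really does yield $\tilde f(\tilde m_0 + r \tilde e_i) = \tilde f(\tilde m_0) + ar$ for all $r \in R$ --- is supplied by Lemma~\ref{L}. What the paper's approach buys is zero additional work; what yours buys is that Theorem~\ref{T1} need not be re-inspected, and the well-definedness of the affine formula across multiple representations of the same element of $M$ is handled transparently by the factorization $L = \bar L \circ \pi$.
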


Note, the $|I|=2$ case of Theorem~\ref{T1} (for free modules) is already
a twofold strengthening of Theorem~\ref{Tblog}, in that
Theorem~\ref{Tblog}
(a)~uses more ``horizontal lines'' (i.e.\ parallel to $e_1 + e_2$), and
(b)~is the special case with $R = \R$ or $\Z/p\Z$ (with $p>2$)
and specific choices of $M, I, e_i, c_i^I$.
(And the condition that $2$ is a non-zerodivisor in the case of $R^2$
fits in with Greenfeld--Tao's use of $R = \R$ or $\Z/p\Z$ for $p$ odd.)
Also recall -- for concreteness -- the notion of affine-linear maps on
modules:

\begin{defn}
Given a unital commutative ring $R$, two $R$-modules $N \leqslant M$, and
a vector $m_0 \in M$, an $R$-valued function $f$ on the ``affine
submodule'' $\ell := m_0 + N \subseteq M$ is \textit{affine-linear} if
for all shifts $m_1 \in \ell$, the map $f_{m_1} : N \to R;\ n \mapsto
f(n+m_1) - f(m_1)$ is an $R$-module map, i.e., $R$-linear.
\end{defn}

The special case used throughout this paper is that of $N = Re$ a line.
For completeness, we recall that affine lines are also known in the
literature as \textit{$1$-flats}.\medskip

We now turn to a quantitative sharpening of Theorem~\ref{T1}. Suppose one
works over $R^n$. Theorem~\ref{Tblog} asserted (without any claims of
optimality, of course) that for $R = \R$ or $\Z/p\Z$, in addition to all
directions parallel to a basis we require $|R|$ many other lines parallel
to the $X$-axis, in order to obtain global linearity. This test set was
reduced in Theorem~\ref{T1} to a single line through the origin -- or one
along each of $2^n - (n+1)$ directions $\{ m_J : 2 \leqslant |J|
\leqslant n \}$ for general $n \geqslant 2$.

It is natural to seek the \textit{minimum} number of such ``test
directions'' $m_J$ that ensure the global affine-linearity of $f$. For
$n=2$, a single line suffices by Theorem~\ref{T1}. For $n \geqslant 3$,
the following strict refinement of Theorem~\ref{T1} shows that the
minimum number is the central binomial coefficient:

\begin{theorem}\label{Trefined}
Suppose $R$ is an integral domain that is infinite, and $M = R^n$ has
basis $e_1, \dots, e_n$ for some integer $n \geqslant 3$. Then there
exist $N := \binom{n}{\lceil n/2 \rceil}$-many directions $\bv_1, \dots,
\bv_N \in R^n$ such that for any map $f : R^n \to R$, if the restrictions
of $f$ to the lines
\[
m_0 + R e_i, \ \forall m_0 \in R^n, \ 1 \leqslant i \leqslant n,
\qquad \text{and} \qquad R \bv_1, \dots, R \bv_N
\]
are each affine-linear, then $f$ is affine-linear on $R^n$. Fewer than
$N$ directions do not suffice.

The same assertions hold if $R$ is a finite integral domain (i.e., field)
of size $> 2^{n-1}$.
\end{theorem}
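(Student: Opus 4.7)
The plan is to recast the problem in linear-algebraic terms via the multi-affine expansion of $f$, and then solve it by building a generalized Vandermonde matrix driven by a $B_h$-type set.

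Since $|R| > 2^{n-1} \geqslant n$ for $n \geqslant 3$ (and $R$ is infinite in the other case), standard polynomial interpolation applies. Affine-linearity of $f$ along every axis-parallel line is exactly the statement that $f$ is multi-affine in $(x_1,\dots,x_n)$, and one may uniquely write
\[
f(x_1,\dots,x_n) \;=\; \sum_{J \subseteq \{1,\dots,n\}} a_J \prod_{j\in J} x_j, \qquad a_J \in R,
\]
with $f$ globally affine-linear iff $a_J = 0$ for all $|J|\geqslant 2$. Restricting $f$ to $R\bv_i$ yields $f(r\bv_i) = \sum_{k=0}^n r^k \bigl( \sum_{|J|=k} a_J V_{i,J} \bigr)$, where $V_{i,J} := \prod_{j\in J} v_{i,j}$. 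The monomials $1,r,\dots,r^n$ are linearly independent as functions $R\to R$ because a nonzero polynomial of degree $\leqslant n$ has at most $n$ roots in the integral domain $R$ while $|R|>n$. Hence affinity of the restriction is equivalent to the linear relations $\sum_{|J|=k} a_J V_{i,J} = 0$ for each $2 \leqslant k \leqslant n$.

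For each fixed $k$, the $N$ such relations assemble into an $N \times \binom{n}{k}$ matrix $M_k := (V_{i,J})_{i,J}$ acting on $(a_J)_{|J|=k}$. I want every $M_k$ to have trivial column kernel, which already forces $N \geqslant \max_k \binom{n}{k} = \binom{n}{\lceil n/2\rceil}$. To realize equality I would pick integers $0 \leqslant d_1 < \cdots < d_n$ so that within each size class $|J|=k$ the subset sums $s_J := \sum_{j\in J} d_j$ are pairwise distinct (the choice $d_j := 2^{j-1}$ works crudely, and a more efficient $B_h$-set does as well), then choose $N$ distinct nonzero scalars $\alpha_1,\dots,\alpha_N \in R$ (possible since $|R| > 2^{n-1} \geqslant N$ for $n \geqslant 3$), and set $\bv_i := (\alpha_i^{d_1},\dots,\alpha_i^{d_n})$. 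Then $V_{i,J} = \alpha_i^{s_J}$, so each $M_k$ is a generalized Vandermonde matrix with $N$ distinct nonzero rows $\alpha_i$ and $\binom{n}{k}$ distinct column exponents. By the classical Schur-polynomial factorization of generalized Vandermonde determinants, every maximal square minor is nonzero, so each $M_k$ has full column rank as required.

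For sharpness, I would fix any $N-1$ directions $\bv_1,\dots,\bv_{N-1}$ and let $k_0 := \lceil n/2\rceil$. Then $M_{k_0}$ is $(N-1)\times N$, so its kernel over $\mathrm{Frac}(R)$ is at least one-dimensional; clearing denominators produces a nonzero tuple $(a_J)_{|J|=k_0} \in R^N$. Declaring all other $a_J$ to be zero gives a multi-affine $f$ that is affine on every axis-parallel line and vanishes (so is trivially affine) on every $R\bv_i$, yet has a nonzero $|J|=k_0$ term, so is not globally affine. I expect the main obstacle to be verifying the simultaneous full-rank property of all the $M_k$ in the upper-bound construction; this is where the $B_h$-type choice of exponents $(d_j)$ and the generalized Vandermonde / Schur-polynomial nonvanishing do the decisive work, and is essentially the ``weak multiplicative $B_h$-set'' mechanism alluded to in the paper's abstract.
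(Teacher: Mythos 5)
Your overall skeleton --- the multi-affine expansion of $f$, the reduction of affinity along $R\bv_i$ to the per-degree linear systems $\sum_{|J|=k} a_J V_{i,J}=0$, and the kernel/counting argument for sharpness --- matches the paper's proof, and your lower-bound construction is fine (indeed somewhat more explicit than the paper's). The gap is in the upper-bound construction. You set $\bv_i=(\alpha_i^{d_1},\dots,\alpha_i^{d_n})$ with $(d_j)$ an additive $B_h$-type sequence and $\alpha_1,\dots,\alpha_N$ merely \emph{distinct nonzero} scalars, so that $M_k=(\alpha_i^{s_J})$ is a generalized Vandermonde matrix, and you assert that every maximal square minor is nonzero by the Schur-polynomial factorization. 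That factorization reads $\det(\alpha_i^{\mu_j})=\prod_{i<j}(\alpha_j-\alpha_i)\cdot s_\lambda(\alpha_1,\dots,\alpha_N)$, and while the Vandermonde factor is nonzero for distinct $\alpha_i$, the Schur factor can vanish at specific points of an integral domain: e.g.\ with exponents $\{0,2\}$ and $\alpha_1=1$, $\alpha_2=-1$ the matrix is $\left(\begin{smallmatrix}1&1\\1&1\end{smallmatrix}\right)$, which is singular. Nonvanishing of generalized Vandermonde minors for arbitrary distinct bases is a total-positivity phenomenon special to positive reals; it fails over general integral domains and finite fields, so your choice of the $\alpha_i$ does not deliver full column rank.

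The patch is not free. Over an infinite integral domain you could choose $(\alpha_1,\dots,\alpha_N)$ outside the zero loci of the finitely many relevant Schur polynomials (these are nonzero polynomials in every characteristic, since $K_{\lambda\lambda}=1$), but over a finite field of size only $2^{n-1}+1$ the degrees of these polynomials are far too large for a degree-counting argument to produce such a point, so the finite-field half of the theorem is not recovered by your route. The paper sidesteps the issue by transposing the roles of bases and exponents: it takes $\bv_i=(s_1^{i-1},\dots,s_n^{i-1})$ for a single set $S=\{s_1,\dots,s_n\}$, so the $(i,J)$ entry of the system is $\bigl(\prod_{j\in J}s_j\bigr)^{i-1}$ and each coefficient matrix $A_k$ is an \emph{ordinary} Vandermonde matrix in the nodes $\prod_{j\in J}s_j$; its determinant is nonzero (indeed a non-zerodivisor) exactly when these products are pairwise distinct with non-zerodivisor differences --- the ``weak multiplicative $B_h$-set'' condition --- which the paper arranges explicitly both for infinite domains and for finite fields of size $>2^{n-1}$ via powers of a multiplicative generator. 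You should either adopt that transposed construction or add a genericity argument for the $\alpha_i$ and restrict to the infinite case.
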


To get a sense of how much Theorem~\ref{Trefined} improves for $M = R^n$
the estimate of $2^n - (n+1)$ directions $m_J$ in Theorem~\ref{T1}, we
note via the Wallis product expansion for $\frac{\pi}{2}$, the
asymptotics $\binom{n}{\lceil n/2 \rceil} \sim \frac{2^{n+\frac{1}{2}}
}{\sqrt{\pi n}}$. Also note that the estimate of $N$ here for $n
\geqslant 3$ does not work for $n=2$, since it gives $\binom{2}{1} = 2$
while the minimum number is $1$ by Theorem~\ref{T1}.

In Section~\ref{SBh}, we strengthen Theorem~\ref{Trefined} by showing
that its sharp bound of $\binom{n}{\lceil n/2 \rceil}$ holds over a
larger class of rings -- which also includes rings with large enough
characteristic. The key novelty involves working with weak multiplicative
$B_h$-sets (or Sidon sets). Recall that classically, (additive) Sidon
sets have been studied since Erd\"os--Tur\'an and Bose--Chowla, and even
earlier.\medskip

We next elucidate a result along similar ``lines''. Begin by noting that
affine-linear maps $: V \to \F$ -- as above -- take affine lines in $V$
to the (affine) line $\F$, obviously. We now study maps $f : V \to W$
(for general $W$) which satisfy this property -- but which need not be
affine-linear when restricted to any affine line. In this case, one
``almost'' recovers affine-linearity, but up to a field automorphism:

\begin{defn}
Suppose $V,W$ are vector spaces over a field $\F$, and $\tau : \F \to \F$
is a field automorphism. A map $f : V \to W$ is \textit{$\tau$-linear} if
$f$ is additive and $f(\la v) = \tau(\la) f(v)$ for all $\la \in \F, v
\in V$. We say $f$ is \textit{$\tau$-affine linear} if $f$ is the
composite of a $\tau$-linear map and a translation in $W$.
\end{defn}

\begin{theorem}\label{T2}
Let $V,W$ be vector spaces over a field $\F \neq \Z / 2 \Z$ with $\dim V
\geqslant 2$, and suppose $f : V \to W$ is any map that takes affine
lines $\ell$ onto affine lines, such that $f(v) \not\in f(\ell)$ whenever
$v \in V$ is not in the affine line $\ell$.
Then $f$ is injective and $\tau$-affine linear for some field
automorphism $\tau$ of $\F$. The converse is straightforward.
\end{theorem}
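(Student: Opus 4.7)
The plan is to adapt the classical fundamental theorem of affine geometry (often attributed to von Staudt), substituting its standing bijectivity hypothesis with our one-sided condition $v \notin \ell \Rightarrow f(v) \notin f(\ell)$; this hypothesis is exactly what is needed to run the usual parallelogram arguments. After composing with the translation $w \mapsto w - f(0)$ of $W$, we may assume $f(0) = 0$ and aim to prove $f$ is $\tau$-linear. Injectivity is immediate: if $f(v_1) = f(v_2)$ with $v_1 \neq v_2$, choose any affine line $\ell \ni v_1$ missing $v_2$ (possible since $\dim V \geq 2$) to violate the hypothesis. The ``onto'' assumption then upgrades to $f|_\ell : \ell \to f(\ell)$ being a bijection of affine lines, equivalently $f^{-1}(f(\ell)) = \ell$ for every line.

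Next I would establish the classical geometric lemmas. Given any $2$-plane $P \subset V$ through two intersecting lines $\ell_1, \ell_2$, the image $f(P)$ lies in a single affine plane $P' \subset W$: for each $q \in P$ pick a transversal in $P$ through $q$ hitting both $\ell_i$'s, so $f$ of this transversal is a line meeting $f(\ell_1)$ and $f(\ell_2)$ and hence lies in the plane $P'$ spanned by the two. This is exactly where $\F \neq \Z/2\Z$ is used, since a $2$-plane over $\Z/2\Z$ has only four points and the transversal construction collapses. Disjoint parallel lines in $P$ thus produce coplanar, disjoint image lines in $W$, which must be parallel. Applied to the parallelogram $\{0, u, v, u+v\}$ for linearly independent $u, v \in V$, this forces $f(u+v) = f(u) + f(v)$. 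For the collinear case of additivity, pick an auxiliary $z$ independent of the common line: the identity $f((w+z) + (-w)) = f(z)$ (a single parallelogram, since $w+z$ and $-w$ are independent) first gives $f(-w) = -f(w)$, and then the decomposition $\la u + \mu u = (\la u + z) + (\mu u - z)$ reduces to the independent-pair additivity whenever $\la + \mu \neq 0$, while the case $\mu = -\la$ is precisely the sign identity just established.

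For the multiplicative structure, note that $f(\F u)$ is a line through $f(0) = 0$ and hence equals $\F f(u)$; define $\sigma_u : \F \to \F$ by $f(\la u) = \sigma_u(\la) f(u)$, a bijection fixing $0$ and $1$. Additivity of $f$ gives additivity of $\sigma_u$. For linearly independent $u, v$, expanding
\[
\sigma_{u+v}(\la)(f(u) + f(v)) = f(\la(u+v)) = f(\la u) + f(\la v) = \sigma_u(\la) f(u) + \sigma_v(\la) f(v),
\]
and using that $f(u), f(v)$ are linearly independent (else $v \in \F u = f^{-1}(\F f(u))$, a contradiction), forces $\sigma_{u+v} = \sigma_u = \sigma_v$; the collinear case of $u, v$ follows by interposing any third vector independent of both. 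Thus $\sigma_u$ is independent of $u$, and writing $\tau := \sigma_u$, the identity $\tau(\la\mu) f(u) = f(\la\mu \cdot u) = f(\la \cdot \mu u) = \tau(\la) f(\mu u) = \tau(\la)\tau(\mu) f(u)$ yields multiplicativity. Since $\tau$ is already bijective on $\F$, it is a field automorphism and $f$ is $\tau$-linear.

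The main obstacle is the plane-to-plane/parallel-preservation step of the second paragraph, where transversals of both $\ell_1$ and $\ell_2$ must be available through every point of $P$; this is the sole point where $|\F| \geq 3$ enters, and it delineates exactly why $\F = \Z/2\Z$ is excluded from the statement. A secondary subtlety is the collinear case of additivity, which must be bootstrapped from the independent-pair parallelogram rule via the sign identity $f(-w) = -f(w)$ and the presence of a third direction, available because $\dim V \geq 2$. The converse direction of the theorem is immediate, since a $\tau$-affine linear injection sends affine lines onto affine lines and automatically forbids $f(v) \in f(\ell)$ when $v \notin \ell$.
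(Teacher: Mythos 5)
Your proposal is correct and follows the same overall architecture as the paper's proof: injectivity from the separation hypothesis, normalization to $f(0)=0$, preservation of planes through a point via transversals meeting two concurrent lines (the paper's Step~1 writes the same transversal explicitly, through $\la' v$ and $\frac{\mu\la'}{\la'-\la}w$, and this is exactly where $\F \neq \Z/2\Z$ enters in both arguments), additivity on linearly independent pairs via parallelograms, and the scaling bijections $\sigma_u$ shown to be independent of $u$ and multiplicative. The one genuine divergence is how additivity of $\tau$ is obtained. You first upgrade additivity of $f$ to collinear pairs by routing through an auxiliary independent direction $z$ (via $f(-w)=-f(w)$ and the decomposition $\la u + \mu u = (\la u + z) + (\mu u - z)$), after which additivity of $\sigma_u$ falls out by comparing coefficients of $f(u)\neq 0$; the paper instead never extends additivity of $f$ beyond independent pairs, but uses the barycentric parametrization $\{\la v + (1-\la)w\}$ of the line through independent $v,w$, observes that the image point must have affine coefficients summing to $1$ with respect to $f(v), f(w)$, deduces $\tau(\la)+\tau(1-\la)=1$, and then multiplies through by $\tau(\nu_1+\nu_2)$ using multiplicativity (treating $\nu_1+\nu_2=0$ separately via $\tau(-1)=-1$). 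Both routes are valid: yours is the more classical bootstrap and delivers full additivity of $f$ as an intermediate statement, while the paper's is shorter at that stage because it leans on the multiplicativity of $\tau$ already in hand.
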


As an immediate consequence, for the two fields mentioned by Tao in
Theorem~\ref{Tblog} -- in fact for any field with a trivial automorphism
group -- one recovers affine-linearity on the nose:

\begin{cor}
Setup as in Theorem~\ref{T2}. If $\F = \R$ or $\Z/p\Z$ for $p$ odd, then
$f$ is affine-linear on $V$. The same holds if $\F = \Q$ or $\Q_p$ for
$p>0$ a prime.
\end{cor}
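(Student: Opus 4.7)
The plan is to apply Theorem~\ref{T2} directly, which produces a field automorphism $\tau \in \Aut(\F)$ such that $f$ is $\tau$-affine linear. Since ordinary affine-linearity is precisely the case $\tau = \mathrm{id}$, the corollary reduces to the purely field-theoretic assertion that $\Aut(\F) = \{\mathrm{id}\}$ for each of the listed fields $\F$.

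For $\F$ a prime field --- either $\Q$ or $\Z/p\Z$ --- any automorphism fixes $1$, and therefore by iterated additivity (and taking inverses in the $\Q$ case) fixes every element of $\F$. For $\F = \R$, I would first observe that an automorphism $\tau$ fixes $\Q$ by the previous argument, and that it preserves nonnegativity since the nonnegative reals are exactly the squares (using $\tau(x^2) = \tau(x)^2 \geqslant 0$). Hence $\tau$ is monotone, and a monotone self-map of $\R$ fixing the dense subfield $\Q$ must equal the identity.

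The main obstacle is the case $\F = \Q_p$, for which I would appeal to the classical rigidity statement $\Aut(\Q_p) = \{\mathrm{id}\}$. The crux is to show that the $p$-adic valuation is intrinsic to the abstract field structure, so that any automorphism preserves $\Z_p$ and is therefore continuous. One standard route (for $p$ odd) is to characterize $1 + p\Z_p$ algebraically as the subgroup of $\Q_p^\times$ consisting of those elements that admit $p^n$-th roots for every $n \geqslant 1$; from this one recovers $\Z_p$, hence continuity, and then $\tau$ must equal the identity because it already fixes the dense subfield $\Q \subset \Q_p$. The case $p = 2$ is handled by a similar but slightly modified algebraic characterization (e.g.\ of $1 + 4\Z_2$).
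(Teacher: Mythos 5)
Your proposal is correct and matches the paper's approach exactly: the paper treats this corollary as an immediate consequence of Theorem~\ref{T2} combined with the classical fact that each listed field has trivial automorphism group, which is precisely your reduction (and your arguments for $\Q$, $\Z/p\Z$, and $\R$ are the standard ones). One caveat on your optional sketch for $\Q_p$: the subgroup $1+p\Z_p$ is \emph{not} the set of elements admitting $p^n$-th roots for every $n$ (e.g.\ $1+p$ has no $p$-th root in $\Q_p$, since any such root would lie in $1+p\Z_p$ and its $p$-th power would lie in $1+p^2\Z_p$); the standard intrinsic characterization instead detects $v_p(x)=0$ via the existence of $n$-th roots of $x$ for infinitely many $n$ coprime to $p(p-1)$ --- but as you explicitly defer to the classical rigidity $\Aut(\Q_p)=\{\mathrm{id}\}$, this slip does not undermine the proof.
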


We end with a historical remark. After we showed Theorem~\ref{T2}, we
learned that variants of it had previously appeared in the literature,
including in the foundational 1850s texts by von Staudt~\cite{vS}, and in
later books by Hartshorne \cite[Proposition~3.11]{H} and by
Snapper--Troyer \cite[Proposition~69.1]{ST} (see also a variant in
\cite{KL}). However, \textit{all} of these variants also assumed that $f
: V \to W$ is a bijection (and $\dim W \geqslant 2$). As Theorem~\ref{T2}
uses weaker hypotheses, our proof necessarily differs in places from the
earlier ones; thus we will elaborate on some of the steps but only sketch
some others.

\section{Two of the proofs}

In this section we show Theorems~\ref{T1} and~\ref{T2}. The proof of
Theorem~\ref{T1} uses a calculation twice, so we isolate it into a lemma.

\begin{lemma}\label{L} Suppose $R$ is a unital commutative ring, and $M$
an $R$-module. If $f : M \to R$ is affine-linear along some line $\ell :=
m_0 + Re$ (with $m_0, e \in M$), then there exists $a \in R$ such that \[
f(m_1 + re) = f(m_1) + ar, \qquad \forall m_1 \in \ell, \ r \in R. \]
\end{lemma}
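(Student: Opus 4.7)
The plan is to unwind the definition of affine-linearity at one convenient basepoint on $\ell$, and then show that the resulting linear ``slope'' controls $f$ from every other basepoint on the same line. Concretely, applying the hypothesis at the basepoint $m_0$ yields an $R$-module map $\varphi : Re \to R$, $re \mapsto f(m_0 + re) - f(m_0)$. Since $\varphi$ is $R$-linear and $Re$ is cyclic, it is determined by its value on $e$; set $a := \varphi(e) \in R$, so that
\begin{equation*}
f(m_0 + re) = f(m_0) + a r \qquad \forall\, r \in R.
\end{equation*}

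Next I would verify the claimed formula at an arbitrary basepoint $m_1 \in \ell$. Write $m_1 = m_0 + s e$ for some $s \in R$. For any $r \in R$, both $m_1$ and $m_1 + re = m_0 + (s+r)e$ lie on $\ell$, so applying the displayed identity at $m_0$ to both points gives
\begin{equation*}
f(m_1 + re) - f(m_1) = \bigl(f(m_0) + a(s+r)\bigr) - \bigl(f(m_0) + a s\bigr) = a r,
\end{equation*}
which is exactly the desired equation $f(m_1 + re) = f(m_1) + ar$.

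There is really no serious obstacle here: the only content is that the ``slope'' produced by the definition of affine-linearity is a well-defined element of $R$ independent of the chosen basepoint on $\ell$, and this is immediate once one observes that the two applications of the $m_0$-relation (at shifts $s$ and $s+r$) cancel the $f(m_0)$ term and the $as$ term. If anything required care, it would be to note that the argument uses only the $R$-linearity (not merely additivity) of $r \mapsto f(m_0 + re) - f(m_0)$, which is what lets us extract a single scalar $a \in R$ rather than an unwieldy additive map; but this is precisely what the hypothesis supplies.
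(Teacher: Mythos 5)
Your proof is correct and follows essentially the same route as the paper: unwind the definition to extract a scalar slope and check that it is independent of the basepoint on $\ell$. The only (harmless) difference is one of bookkeeping --- you invoke the $R$-linearity hypothesis solely at the single basepoint $m_0$ and deduce the formula at every other $m_1 = m_0 + se$ by subtracting two instances of the $m_0$-relation, whereas the paper introduces a slope $a^{(m_1)}$ at each basepoint and shows these agree by evaluating $f(m_2+e)$ in two ways; your variant even shows the hypothesis at one basepoint suffices.
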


\begin{proof} Note that $f : \ell \to R$ is affine-linear if and only if
for each $m_1 \in \ell$ the map $\varphi : re \mapsto f(m_1+re) - f(m_1)$
is $R$-linear. This is equivalent to a choice of scalar $a^{(m_1)} =
\varphi(1)$, via: \[ f(m_1 + re) - f(m_1) = a^{(m_1)} r, \qquad \forall r
\in R. \] Setting $r=1$ yields: $a^{(m_1)} = f(m_1+e) - f(m_1)$. It
remains to show that $a^{(m_1)}$ is independent of the choice of $m_1 \in
\ell$. Given $m_2 = m_1 + re$, say, evaluate $f(m_2 + e)$ in two ways:
\begin{align*}
f(m_2 + e) = &\ f(m_2) + a^{(m_2)} = f(m_1 + re) + a^{(m_2)} = f(m_1) +
a^{(m_1)} r + a^{(m_2)}\\
= &\ f(m_1 + (r+1)e) = f(m_1) + a^{(m_1)} (r+1).
\end{align*}
Thus $a^{(m_2)} = a^{(m_1)}$ and the proof is complete.
\end{proof}

\begin{proof}[Proof of Theorem~\ref{T1}]
We begin by computing $f(m'_0)$ for arbitrary $m'_0 \in M$. Since $\{ e_i
: i \in I \}$ generates $M$, we claim for every finite subset $J'
\subseteq I$ -- and choice of scalars $x_j \in R$ -- that
\begin{align}\label{E3}
f \left( m_0 + \sum_{j \in J'} x_j e_j \right) = &\ f(m_0) +
\sum_{\emptyset \neq J \subseteq J'} \Psi_J^{(m_0)} \prod_{j \in J}
x_j,\\
\text{where } \Psi_J^{(m_0)} := &\ \sum_{K \subseteq J} (-1)^{|J|-|K|}
f(m_0 + \sum_{k \in K} e_k).\notag
\end{align}
(This is a multi-affine polynomial in the $x_j$.)
Interestingly, we need to use only the $m_0 = 0$ special case
of~\eqref{E3} to show Theorem~\ref{T1}, but to prove this special case by
induction -- as is now done -- we need to use~\eqref{E3} for certain
nonzero $m_0 \in M$. Before proving~\eqref{E3}, we write down its first
two cases so that they may help to see the general case better. If $J' =
\{ 1 \}$ then $f(m_0 + x_1 e_1) = f(m_0) + (f(m_0 + e_1) - f(m_0)) x_1$.
Next, if $J' = \{ 1, 2 \}$, then
\begin{alignat*}{3}
& &\ f(m_0 \ + &\ x_1 e_1 + x_2 e_2)\\
& = &\ f(m_0)\, + &\ (f(m_0 + e_1) - f(m_0)) x_1 + (f(m_0 + e_2) -
f(m_0)) x_2\\
& &\ \qquad + &\ (f(m_0 + e_1 + e_2) - f(m_0 + e_1) - f(m_0 + e_2) +
f(m_0)) x_1 x_2.
\end{alignat*}

We now prove~\eqref{E3}, by induction on $|J'|$, with the result a
tautology if $J'$ is empty. For $J' = \{ j \}$ a singleton, the result is
immediate from Lemma~\ref{L} (and its proof) applied to the line $m_0 +
Re_j$.

For the induction step, let $J' = \{ 1, 2, \dots, k \}$ for some $k
\geqslant 2$. Start by working with $m'_0 := m_0 + \sum_{j=1}^{k-1} x_j
e_j$, and compute using (the proof of) Lemma~\ref{L}:
\[
f \left( m_0 + \sum_{j=1}^k x_j e_j \right) = f \left( m'_0 + x_k e_k
\right) = (f(m'_0 + e_k) - f(m'_0)) x_k + f(m'_0).
\]

By the induction hypothesis, the final term on the right-hand side equals
\begin{equation}\label{E5}
f \left( m_0 + \sum_{j=1}^{k-1} x_j e_j \right) = f(m_0) +
\sum_{\emptyset \neq J \subseteq \{ 1, \dots, k-1 \}} \Psi_J^{(m_0)}
\prod_{j \in J} x_j,
\end{equation}

\noindent while the remaining difference on the right-hand side involves
evaluating $f$ at
\[
m'_0 = m_0 + \sum_{j=1}^{k-1} x_j e_j \quad \text{and} \quad m'_0 + e_k =
(m_0 + e_k) + \sum_{j=1}^{k-1} x_j e_j.
\]

Apply the induction hypothesis to both of these arguments. This yields:
\begin{align*}
&\ f \left( m_0 + e_k + \sum_{j=1}^{k-1} x_j e_j \right) x_k - f \left(
m_0 + \sum_{j=1}^{k-1} x_j e_j \right) x_k \\
= &\ x_k \sum_{\emptyset \neq J \subseteq \{ 1, \dots, k-1 \}} \left(
\Psi_J^{(m_0 + e_k)} \prod_{j \in J} x_j - \Psi_J^{(m_0)} \prod_{j \in J}
x_j \right)\\
= &\ \sum_{k \in J \subseteq \{ 1, \dots, k \}} \left( \Psi_{J \setminus
\{ k \}}^{(m_0 + e_k)} - \Psi_{J \setminus \{ k \}}^{(m_0)} \right)
\prod_{j \in J} x_j,
\end{align*}
and the difference of the $\Psi$-values in the summand is precisely
$\Psi_J^{(m_0)}$. Adding this to~\eqref{E5} proves the induction step and
hence~\eqref{E3}. Notice (for the purposes of the next section), this
part of the proof works in any unital commutative ring.\medskip

Having shown~\eqref{E3}, we return to the proof of the theorem. For $J'
\subseteq I$ finite with $|J'| \geqslant 2$, use the hypotheses to
compute $f(r m_{J'})$ in two ways -- via Lemma~\ref{L}, and
via~\eqref{E3} with $m_0 = 0$:
\begin{align*}
f(r m_{J'}) = &\ f(0) + (f(m_{J'}) - f(0)) r\\
= &\ f(0) + \sum_{\emptyset \neq J \subseteq J'} \Psi_J^{(0)} r^{|J|}
\prod_{j \in J} c_j^J, \qquad \forall r \in R.
\end{align*}

The second equality here is an equality of polynomial functions (in one
variable) of degree $n$, say, where $n \leqslant |J'| \leqslant |I|$.
Subtracting around this second equality yields an equality of the form
\begin{equation}\label{Evandermonde}
a_1 r + a_2 r^2 + \cdots + a_n r^n = 0, \qquad \forall r \in R
\end{equation}
and we now claim that $a_1 = a_2 = \cdots = a_n = 0$. Indeed,
evaluating~\eqref{Evandermonde} at the elements $1$, $2 = 1+1$, \dots, $n
= 1 + \cdots + 1$ in $R$ yields the system of equations
\[
A \cdot \begin{pmatrix} a_1 \\ a_2 \\ \vdots \\ a_n \end{pmatrix} = 0,
\quad \text{where} \quad A = \begin{pmatrix} 1 & 1^2 & \cdots & 1^n \\ 2
& 2^2 & \cdots & 2^n \\ \vdots & \vdots & \ddots & \vdots \\ n & n^2 &
\cdots & n^n \end{pmatrix}
\]
is ``essentially'' a Vandermonde matrix. Pre-multiplying by the adjugate
of $A$ yields: $\det(A) \cdot a_i = 0\ \forall i$. But $\det(A) =
\prod_{i=1}^n i!$, which is a non-zerodivisor in $R$ by assumption, and
so $a_1 = \cdots = a_n = 0$ as desired. This holds for all finite subsets
$J' \subseteq I$.

Now let $J'$ have size $2$. Then $a_2$ is the only ``higher degree''
($>1$) term in~\eqref{Evandermonde}, and so $a_2 = \Psi_{J'}^{(0)}
\prod_{j \in J'} c_j^{J'} = 0$. Since each $c_j^{J'}$ is a
non-zerodivisor, $\Psi_{J'}^{(0)} = 0$ for all $J' \subseteq I$ of size
$2$.

Next, let $J'$ have size $3$. By the preceding paragraph, $a_2 = 0$, so
$a_3$ is the only nonzero higher degree term in~\eqref{Evandermonde}, and
so the same analysis implies $\Psi_{J'}^{(0)} = 0$ for all $J' \subseteq
I$ of size $3$.

Continuing inductively, $\Psi_{J'}^{(0)} = 0$ whenever $J' \subseteq I$
has size at least $2$. Using~\eqref{E3} with $m_0 = 0$,
\[
f \left( \sum_{i \in I} x_i e_i \right) = f(0) + \sum_{i \in I} (f(e_i) -
f(0)) x_i,
\]
where all but finitely many coefficients $x_i \in R$ are zero,
and the rest are arbitrary. Hence $f$ is affine-linear on $M$, as
claimed.
\end{proof}

\begin{remark}\label{Rnzd}
By the lines after~\eqref{Evandermonde}, the assumption of $n \in R$ not
being a zerodivisor if $1 \leqslant n \leqslant |I|$ -- or equivalently,
if $n$ is moreover prime in $\Z$ -- may be replaced by requiring the
existence of non-zerodivisors $r_1, \dots, r_n$ such that $r_i - r_j$ is
also a non-zerodivisor for $i \neq j$, for each $1 \leqslant n \leqslant
|I|$. This shows the final line of Theorem~\ref{T1}.
\end{remark}

\begin{remark}
A related result, alluded to in the previous section, was shown by Tao
and Ziegler in \cite{TZ}. Namely, the authors first define polynomials on
additive/abelian groups $G$ as follows: the only degree $<0$ polynomial
is the zero map; the degree $<1$ polynomials along a subgroup $H
\leqslant G$ are the constant maps; and affine-linear maps along $H$
indeed turn out to be polynomials of degree $<2$. Now Proposition~{1.2}
in \textit{loc.\ cit.}\ says that if $P$ is a polynomial of degree $<d_i$
along a subgroup $H_i \leqslant G$ for $i \in I = \{ 1,2 \}$, then it is
a polynomial of degree $< d_1 + d_2 - 1$ along $H_1 + H_2$. In our
situation (specialized as above to $M = G = \F^2$ over $R = \F$), with
$H_i = \F e_i$ for $i=1,2$, we would obtain polynomials of degree $<3$
along $H_1 + H_2$. This is precisely what happens in the above proof,
see~\eqref{E3} e.g.\ for $|J'|=2$ (and inductively for larger $J'$) --
whereby we obtain a multi-affine polynomial. Now the extra information
along the radial lines $R m_{J'}$ (and \textit{not} requiring their
translates, cf.\ Theorem~\ref{Tblog}) removes all higher degree
monomials.
\end{remark}

Now we show Theorem~\ref{T2}; as stated in it, we assume henceforth that
$f$ takes lines onto lines.

\begin{proof}[Proof of Theorem~\ref{T2}]
(Below, Greek letters except $\tau$ denote scalars in $\F$.) We start
with three initial observations, using $\dim V \geqslant 2$.
First, $\dim W \geqslant 2$, because one can take any line $\ell = \F w
\subseteq V$ and a vector $v \in V \setminus \ell$, so that $f(v) \in W
\setminus f(\ell)$.
Second, the hypotheses imply $f$ is one-to-one (but not necessarily
onto). Indeed, if not -- say if $f(x) = f(y)$ for $x \neq y \in V$ --
then choose $v \in V$ not on the line joining $x,y$. Now $x$ is not in
the line $\ell$ joining $v,y$, so $f(y) = f(x) \not\in f(\ell)$, a
contradiction.
Third, we may replace $f$ by $f(\cdot) - f(0)$, thereby assuming that
$f(0) = 0$ henceforth.

\begin{step}
We claim that $f$ preserves planes through the origin. More precisely, if
$v,w$ are linearly independent in $V$ then so are $f(v)$ and $f(w)$, and
$f(\Span(v,w)) = \Span(f(v), f(w))$.
\end{step}

\begin{proof}
Since $f$ is one-to-one, $f(v), f(w) \neq 0 = f(0)$. Now since $f(w)
\not\in f(\F v) = \F f(v)$ (the line through $f(v)$ and $f(0) = 0$), the
vectors $f(v), f(w)$ are linearly independent. Consider $\la v + \mu w$
for $\lambda, \mu \in \F$; if $\la \cdot \mu = 0$, this is on one of the
two ``coordinate axes'', hence in the span of $f(v), f(w)$.

Otherwise $\la,\mu \in \F^\times$. Choose $\la' \neq 0,\la$ in $\F$
(since $\F \neq \Z / 2 \Z$); then $\la v + \mu w$ is on the line
containing $\la' v$ and $\frac{\mu \la'}{\la' - \la} w$ (use the
coefficients $t = \lambda/\lambda'$ and $1-t$). Thus, $f(\lambda v + \mu
w)$ is on the line through the two points $f(\lambda' v)$ and
$f(\frac{\mu \la'}{\la' - \la} w)$, hence in their span. But $f(\la'v)
\in f(\F \cdot v) = \F \cdot f(v)$ (by hypothesis), and similarly for
$w$, so $f(\lambda v + \mu w) \in \Span(f(v),f(w))$. Together with the
preceding paragraph, this implies $f(\Span(v,w)) \subseteq \Span(f(v),
f(w))$.

For the reverse inclusion, since $f(v), f(w)$ are a basis for their span,
any linear combination $\lambda f(v) + \mu f(w)$ is either in the image
of $f(\F v)$ or $f(\F w)$ when $\lambda \cdot \mu = 0$, else repeating
the above calculation, $\lambda f(v) + \mu f(w)$ lies on the line through
$\lambda' f(v)$ and $\frac{\mu \la'}{\la' - \la} f(w)$, with $\lambda,
\mu, \lambda' \in \F^\times$.  By hypothesis, $\lambda' f(v) =
f(\lambda'' v)$ for some $\lambda'' \in \F^\times$, and similarly the
other term equals $f(\mu'' w)$ for some $\mu'' \in \F^\times$. But then
$\lambda f(v) + \mu f(w) \in f(\ell)$, where the affine line $\ell
\subseteq \Span(v,w)$ passes through the linearly independent vectors
$\lambda'' v, \mu'' w$. Hence $\lambda f(v) + \mu f(w) \in f(\Span(v,w))\
\forall \lambda, \mu \in \F$.
\end{proof}

\begin{step}
We claim $f$ is additive on ``linearly independent vectors'' (so it
suffices to study $f$ on lines).
\end{step}

\begin{proof}[Sketch of proof]
One first shows that $f$ preserves the notion of parallel lines in a
plane through the origin (via the preceding step). Next, taking
intersections, one shows that $f$ takes the sets of vertices of a
parallelogram containing the origin in a plane, again to such a set (via
the preceding step), while also preserving ``non-adjacency''. Now if
$v,w$ are linearly independent, then $0, v, v+w, w$ are four such
vertices, hence so are $0 = f(0), f(v), f(v+w), f(w)$. But then $f(v+w) +
0 = f(v) + f(w)$.
\end{proof}

\begin{step}
Suppose $v,w$ are linearly independent vectors in $V$, and $f(\la v) =
p(\la) f(v)$ and $f(\la w) = q(\la) f(w)$ for all $\la \in \F$, where
$p,q : \F \to \F$ are bijections that each fix $0,1$. Then $p \equiv q$
on $\F$, and this common bijection -- say denoted by $\tau$ -- is
multiplicative.
\end{step}

\begin{proof}[Sketch of proof]
Given $\la \in \F$, from above say $f(\la(v+w)) = \mu f(v+w) = \mu f(v) +
\mu f(w)$ for some $\mu \in \F$. But this also equals $f(\la v) + f(\la
w) = p(\la) f(v) + q(\la) f(w)$. Hence $p \equiv q$ on $\F$.

Denote this common bijection by $\tau : \F \to \F$. Now given $\lambda,
\mu \in \F$, compute $f(\mu (\lambda v + w))$ in two ways, to obtain
$\tau(\mu) (\tau(\lambda) f(v) + f(w))$ and $\tau(\mu \lambda) f(v) +
\tau(\mu) f(w)$. Hence $\tau(\mu \lambda) = \tau(\mu) \tau(\lambda)$.
\end{proof}

To summarize: $f(\la v + w) = \tau(\la) f(v) + f(w)$ for linearly
independent $v,w$ in $V$ and all scalars $\lambda \in \F$, where $\tau$
is a multiplicative bijection on $\F$ that fixes $0,1$. The final
assertion is that $\tau$ is also additive: $\tau(\nu_1 + \nu_2) =
\tau(\nu_1) + \tau(\nu_2)\ \forall \nu_1, \nu_2 \in \F$. We include a
short proof for completeness.

There are two cases. First, the above steps imply $\tau(-1)^2 = \tau(1) =
1$, so (irrespective of whether or not $\rm{char}(\F) = 2$,) $\tau(-1) =
-1$ since $\tau$ is a bijection. Rescaling, $\tau(-\nu) + \tau(\nu) = 0 =
\tau(-\nu + \nu)$.

If instead $\nu_1 + \nu_2 \neq 0$, then recalling that the line through
two linearly independent points $v,w$ is parametrized as $\{ \la v + (1 -
\la) w : \la \in \F \}$, we evaluate $f$ at any point on this line:
\[
f(\la v + (1 - \la)w) = f(\la v) + f((1 - \la)w) = \tau(\la) f(v) +
\tau(1 - \la) f(w).
\]

\noindent By assumption, this lies on the line between (the distinct
points) $f(v)$ and $f(w)$; hence the coefficients add up to $1$, i.e.,
$\tau(\la) + \tau(1 - \la) = 1 = \tau(1)$. Choosing $\la =
\frac{\nu_1}{\nu_1 + \nu_2}$ yields
\[
\tau \left(\frac{\nu_1}{\nu_1 + \nu_2} \right) + \tau
\left(\frac{\nu_2}{\nu_1 + \nu_2} \right) = \tau(1).
\]

\noindent This implies $\tau$ is additive, upon multiplying both sides by
$\tau(\nu_1 + \nu_2) \neq \tau(0) = 0$.
\end{proof}

\section{Quantitative sharpening via weak multiplicative
$B_h$-sets}\label{SBh}

We end by quantitatively sharpening Theorem~\ref{T1} via
Theorem~\ref{Trefined}. In fact we show here the latter result, under
weaker hypotheses. This requires the following notion.

\begin{defn}
Given an integer $h \geqslant 1$, a finite subset $S$ of a unital
commutative ring $R$ is said to be a \textit{weak multiplicative
$B_h$-set} if the product map $\Pi : \binom{S}{h} \to R$, sending each
$h$-element subset of $S$ to their product, is injective. Here and below,
$\binom{S}{h}$ consists of all $h$-element subsets of $S$. 
\end{defn}

Notice that replacing ``product'' by ``sum'' in the above definition
recovers a classical notion in additive combinatorics: that of a
\textit{$B_h$-set / Sidon set}, provided one \textit{further}
allows repeated elements -- i.e., if the domain of definition for the sum
map $\Sigma$ is expanded from $\binom{S}{h}$ to $S^h$. Such sets have
been studied previously (mostly for $h=2$, but also otherwise), including
by Singer~\cite{Singer}, Erd\"os--Tur\'an~\cite{ET},
Bose--Chowla~\cite{BC}, and in later works by Lindstr\"om~\cite{L} and
Cilleruelo~\cite{C} among others.
(See also the references in these works.)\footnote{For completeness we
mention the related notion of an abelian group $G$ -- mostly studied for
$G = \Z$ again -- containing a set with ``discrete subset sums'', in
which case one would like the sum map $\Sigma$ to be one-to-one on the
union of the domains. That is, $\Sigma : 2^S \setminus \{ \emptyset \}
\to G$ is injective. (Bounds on the sizes of) such sets were studied by
Erd\"os--Moser, Conway, Guy, Elkies, Bohman, and others -- see
e.g.~\cite{DFX} for more references and follow-ups. This notion is
strictly more restrictive than that of being individually or
simultaneously a $B_h$-set for various $h$, which is the notion of
interest in the present work.}

The above multiplicative notion has been studied before -- see e.g.\
\cite{Erdos,LP,Ru} -- but not as well as the additive version. We now
list some examples of unital commutative rings in which weak
multiplicative $B_h$-sets exist, and show that each ring satisfies two
key properties that we will use to quantitatively refine
Theorem~\ref{T1}:
\begin{enumerate}
\item The existence of an $n$-element subset $S$ that is simultaneously a
weak multiplicative $B_h$-set for $1 \leqslant h \leqslant n$, where $n
\geqslant 3$.

\item We also require the set $S$ to satisfy a strengthening of point (1)
above and of the property mentioned in Remark~\ref{Rnzd}: for all
integers $1 < h < n$ and all subsets $J \neq J' \in \binom{S}{h}$, the
difference $\prod_{s \in J} s - \prod_{s' \in J'} s'$ is not just
nonzero, but a non-zerodivisor in $R$. (In particular for $h=2$, using
$ss'-ss'' = s(s'-s'')$ for distinct $s,s',s'' \in S$ implies that each $s
\in S$ is a non-zerodivisor.)
\end{enumerate}

We now mention several classes of rings which satisfy both of these
properties, starting with both settings listed in Theorem~\ref{Trefined}.

\begin{example}\label{Ex1}
Suppose $n \geqslant 3$ and the $R$ contains a cyclic semigroup of size
at least $2^{n-1}$, say with generator $g \in R$, such that $g^k-1,g$ are
non-zerodivisors for all $1 \leqslant k \leqslant 2^{n-1}-1$. Then using
binary arithmetic, it follows that the subset
\[
S := \{ 1;\ g,\ g^2,\ \dots,\ g^{2^{n-2}} \} \subseteq \langle g \rangle
\]
is an $n$-element subset that satisfies both properties above.
(Notice that the multiplicative $B_h$-set property on $\langle g \rangle$
is essentially the same as the additive version, via exponentiating, and
so one can use the sharper bounds in the literature to reduce the size of
the cyclic semigroup $\langle g \rangle$.)

In particular, all finite integral domains (i.e., finite fields) of size
$\geqslant 2^{n-1}+1$ satisfy both properties above (the second property
holds since $2^{n-1} \geqslant n$ for $n \geqslant 3$).
\end{example}

\begin{example}
Suppose $R$ is an infinite integral domain. Then $R$ satisfies both
properties above for every $n \geqslant 3$.
To see why, let $\F$ be the quotient field of $R$, denote
$S' := \{ 1, \dots, n \}$, and consider the homogeneous polynomial
\[
p(x_1, \dots, x_n) := \prod_{h=1}^{n-1} \prod_{J \neq J' \in
\binom{S'}{h}} \left( \prod_{j \in J} x_j - \prod_{j' \in J'} x_{j'}
\right) \in \F[x_1, \dots, x_n].
\]
This is a product of nonzero polynomials, so its zero locus is not all of
$\F^n$ because $\F$ is infinite. Since the nonzero-locus of $p$ is closed
under rescaling by $R$, clearing denominators yields points in $R^n$ in
the nonzero-locus of $p$. Each point gives an $n$-element set satisfying
both properties above.
\end{example}

Additionally, all rings with ``large enough characteristic'' also turn
out to work:

\begin{example}
Suppose $R$ is a unital commutative ring in which the elements $1,\ 2 =
1+1,\ 3 = 1+1+1, \ \dots, \ p_1 \cdots p_n$ are non-zerodivisors, where
$p_i$ is the $i$th prime integer and $n \geqslant 3$. Then
$S := \{ p_1, \dots, p_n \}$
is an $n$-element subset that satisfies both properties above.
\end{example}

We further mention a fourth class of examples -- polynomial rings:

\begin{example}
(Pointed out to us by Ananthnarayan Hariharan.)
Suppose $R'$ is any unital commutative ring, and define the polynomial
$R'$-algebra $R := R'[x_1, \dots, x_n]$. Using the monomial basis of the
free $R'$-module $R$, one checks that $S := \{ x_1, \dots, x_n \}
\subseteq R$ satisfies the above conditions.
\end{example}

As the above examples suggest, the hypotheses of Theorem~\ref{Trefined}
can be weakened, and we have:

\begin{theorem}\label{Tfinal} 
Fix an integer $n \geqslant 3$ and let $R$ be a unital commutative ring
that satisfies properties~(1) and~(2) listed just before
Example~\ref{Ex1}.
Then there exist $N = \binom{n}{\lceil n/2 \rceil}$-many directions
$\bv_1, \dots, \bv_N \in R^n$ such that for any map $f : R^n \to R$, if
the restrictions of $f$ to the lines \[ m_0 + R e_i, \ \forall m_0 \in
R^n, \ 1 \leqslant i \leqslant n, \qquad \text{and} \qquad R \bv_1,
\dots, R \bv_N \] are each affine-linear, then $f$ is affine-linear on
$R^n$. Fewer than $N$ directions do not suffice.
\end{theorem}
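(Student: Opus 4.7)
The plan is to prove the two assertions -- sufficiency of $N := \binom{n}{\lceil n/2 \rceil}$ directions and insufficiency of fewer -- separately, both building on the multi-affine expansion~\eqref{E3} already derived during the proof of Theorem~\ref{T1}. That expansion uses only affine-linearity along the basis-parallel lines; the sole remaining task is to force the higher-order coefficients $\Psi_J^{(0)}$ for $|J| \geqslant 2$ to vanish.

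For sufficiency, I would pick $s_1, \dots, s_n$ from the set $S$ furnished by properties~(1)--(2), and take the geometric-progression directions
\[
\bv_i := \sum_{j=1}^n s_j^{\,i}\, e_j, \qquad i = 1, \dots, N.
\]
For each $i$, comparing Lemma~\ref{L} along $R \bv_i$ with \eqref{E3} at $m_0 = 0$ and $x_j = r s_j^i$, and then running the Vandermonde argument in $r$ at the evaluation points $s_1, \dots, s_n$ (whose differences are non-zerodivisors via the $h{=}2$ case of property~(2) applied to $ss' - ss'' = s(s'-s'')$), extracts for each degree $k \in \{2, \dots, n\}$ the homogeneous system
\[
\sum_{J \in \binom{[n]}{k}} \Psi_J^{(0)}\, q_J^{\,i} \;=\; 0, \qquad i = 1, \dots, N, \qquad q_J := \prod_{j \in J} s_j.
\]
Taking the first $\binom{n}{k}$ rows yields a square Vandermonde-times-diagonal matrix whose determinant is $\prod_J q_J \cdot \prod_{J < J'}(q_{J'} - q_J)$; this is a non-zerodivisor by property~(2) (and because each $s_j$ is a non-zerodivisor). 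The adjugate then forces $\Psi_J^{(0)} = 0$ for every $|J| \geqslant 2$, so $f$ is affine-linear.

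For insufficiency, given any $M < N$ directions $\bv_i = (c_1^i, \dots, c_n^i)$, I would construct an explicit non-affine-linear $f$ at the middle degree $k := \lceil n/2 \rceil$. The $M \times \binom{n}{k}$ matrix $A_{iJ} := \prod_{j \in J} c_j^i$ has more columns than rows, so by McCoy's invariance-of-rank theorem for free modules over a commutative ring it has a nonzero kernel vector $(\Psi_J)_{|J| = k} \in R^{\binom{n}{k}}$. The squarefree multi-affine polynomial $f(x) := \sum_{|J| = k} \Psi_J \prod_{j \in J} x_j$ is affine-linear on every basis-parallel line and vanishes identically (hence is trivially affine-linear) on each $R \bv_i$, yet has a nonzero degree-$k$ component with $k \geqslant 2$ and is therefore not globally affine-linear.

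The principal difficulty is making a single family of $N$ directions serve simultaneously for every degree $k \in \{2, \dots, n\}$; this is what motivates the geometric-progression choice $\bv_i = \sum_j s_j^i e_j$ and the use of the weak multiplicative $B_h$-structure for \emph{all} $h$, since the resulting $N \times \binom{n}{k}$ matrices then share a common Vandermonde shape whose top square minor is controlled uniformly by properties~(1)--(2). A secondary but essential technical point is that the lower bound cannot rely on field-theoretic dimension counting, as $R$ need not be an integral domain here; McCoy's theorem supplies the necessary nonzero kernel vector in the general commutative setting.
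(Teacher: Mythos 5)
Your proposal is correct and follows essentially the same route as the paper: the multi-affine expansion \eqref{E3}, geometric-progression directions built from a simultaneous weak multiplicative $B_h$-set, and a Vandermonde determinant that is a non-zerodivisor by property~(2) (your exponent shift from $i-1$ to $i$ only multiplies that determinant by the non-zerodivisor $\prod_J q_J$, and conveniently absorbs the $k=n$ case that the paper treats separately). For the lower bound the paper only records the equations-versus-unknowns count at $k=\lceil n/2\rceil$, so your explicit counterexample via a McCoy kernel vector is a welcome completion of that sketch; the verification that the resulting squarefree degree-$k$ polynomial is affine-linear on basis-parallel lines, vanishes identically on each $R \bv_i$, and is not globally affine-linear is all sound.
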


\begin{proof}
Begin by carrying out the analysis in the first part of the proof of
Theorem~\ref{T1} around~\eqref{E3}. (As noted there, this analysis
required no restrictions on the ring $R$.) Thus, Equation~\eqref{E3}
shows that $f(x_1, \dots, x_n)$ is a multi-affine polynomial in the
$x_j$. Write it as:
\[
f({\bf x}) = \sum_{J \subseteq \{ 1, \dots, n \}} a_J {\bf x}^J,
\]
where ${\bf x}^J$ is the product of $x_j$ for $j \in J$. Also note that
every such polynomial is affine-linear on the lines $m_0 + R e_i$, for
all $m_0 \in R^n$ and all $1 \leqslant i \leqslant n$.

Next, for a general direction $\bv \in R^n$, write its coordinates as
$\bv = (v_1, \dots, v_n)$. Then
\[
f(r \cdot {\bf v}) = \sum_{k=0}^n r^k \left( \sum_{|J| = k} a_J \prod_{j
\in J} v_j \right), \qquad \forall r \in R.
\]

Adopt the strategy around~\eqref{Evandermonde}, via the hypothesis of
property~(2) listed before Example~\ref{Ex1}. Thus the coefficient of
$r^k$ vanishes for each $k \geqslant 2$. Notice for a fixed direction $v$
that evaluating across all $r \in R$ only comes this far, i.e., yields
\begin{equation}\label{Ecomput}
\sum_{|J| = k} a_J \prod_{j \in J} v_j = 0, \qquad \forall 2 \leqslant k
\leqslant n.
\end{equation}

For a fixed $k$, and using a specified direction $\bv$, this is an
equation in the $\binom{n}{k}$ variables $a_J$. Thus, we would require at
least $\binom{n}{k}$ directions to resolve this system. In particular if
$k=N$, using fewer than $N = \binom{n}{\lceil n/2 \rceil}$-many equations
will not be able to show that $a_J = 0$ for all $J$ of size $N$. Thus $N$
is a lower bound for the number of directions needed.

For the upper bound, by the hypotheses there exists an $n$-element set $S
= \{ s_1, \dots, s_n \} \subseteq R$ that is at once a weak
multiplicative $B_h$-set for $1 \leqslant h \leqslant n$. Define the
desired directions/vectors via:
\[
{\bf v}_i := (s_1^{i-1}, \dots, s_n^{i-1}), \qquad 1 \leqslant i
\leqslant N = \binom{n}{\lceil n/2 \rceil}.
\]

Evaluating $f(\cdot)$ along the directions $R {\bf v}_i$ yields as
in~\eqref{Ecomput} the system of equations
\begin{equation}\label{Esystem}
\sum_{|J| = k} a_J \prod_{j \in J} s_j^{i-1} = 0, \qquad \forall \, 2
\leqslant k \leqslant n, \ \ 1 \leqslant i \leqslant \binom{n}{k}.
\end{equation}

This can be rewritten in the form $A_k \cdot (a_J)_{|J| = k} = 0$, where
$A_k$ is a matrix of size $\binom{n}{k} \times \binom{n}{k}$ with $(i,J)$
entry $\left( \prod_{j \in J} s_j \right)^{i-1}$. Now if $k=n$, the leading
coefficient of $f(r \cdot {\bf v}_1)$ gives $a_{\{ 1, \dots, n \}} \cdot
1 = 0$. If instead $2 \leqslant k \leqslant n-1$, then $A_k$ is a
``usual'' Vandermonde matrix, and $\det(A_k)$ is a non-zerodivisor by
property~(2) in the hypotheses. Pre-multiplying~\eqref{Esystem} by the
adjugate of $A_k$ and canceling $\det(A_k)$, $a_J = 0$ for $|J| = k
\geqslant 2$. Thus $f$ has no higher-order terms, so is affine-linear on
$R^n$.
\end{proof}

\subsection*{Acknowledgments}
A.K.~was partially supported by Ramanujan Fellowship grant
SB/S2/RJN-121/2017 and SwarnaJayanti Fellowship grants SB/SJF/2019-20/14
and DST/SJF/MS/2019/3 from SERB and DST (Govt.~of India).

\subsection*{Data availability}
Data sharing not applicable to this article as no datasets were generated
or analysed during the current study.




\begin{thebibliography}{88}

\bibitem{BC}
Raj Chandra Bose and Sarvadaman Chowla.
\newblock Theorems in the additive theory of numbers.
\newblock \href{http://dx.doi.org/10.1007/BF02566968}{\em Comment.\
Math.\ Helv.}, 37:141--147, 1962/63.

\bibitem{C}
Javier Cilleruelo.
\newblock New upper bounds for finite $B_h$ sequences.
\newblock \href{http://dx.doi.org/10.1006/aima.2000.1961}{\em Adv.\ in
Math.}, 159(1):1--17, 2001.

\bibitem{DFX}
Quentin Dubroff, Jacob Fox, and Max Wenqiang Xu.
\newblock A note on the Erd\"{o}s distinct subset sums problem.
\newblock \href{http://dx.doi.org/10.1137/20M1385883}{\em SIAM J.\ Disc.\
Math.} 35(1):322--324, 2021.

\bibitem{Erdos}
Paul Erd\"os.
\newblock On some applications of graph theory to number theoretic
problems.
\newblock {\em Publ.\ Ramanujan Inst.}, 1:131--136, 1968/69.

\bibitem{ET}
Paul Erd\"os and P\'al Tur\'an.
\newblock On a problem of Sidon in additive number theory, and on some
related problems.
\newblock \href{http://dx.doi.org/10.1112/jlms/s1-16.4.212}{\em J.\
London Math.\ Soc.}, 16(4):212--215, 1941.

\bibitem{GT}
Rachel Greenfeld and Terence Tao.
\newblock A counterexample to the periodic tiling conjecture.
\newblock {\em Preprint},
\href{http://https://arxiv.org/abs/2211.15847}{arXiv:math.CO/2211.15847},
2022.

\bibitem{H}
Robin Hartshorne.
\newblock
\href{https://userpage.fu-berlin.de/aconstant/Alg2/Bib/Hartshorne_Projective.pdf}{\em
Foundations of Projective Geometry}.
\newblock W.\ A.\ Benjamin, New York, 1967.

\bibitem{KL}
Moishe Kahan and Dustan Levenstein.
\newblock
\href{https://math.stackexchange.com/questions/2035287/}{https://math.stackexchange.com/questions/2035287/},
2016.

\bibitem{L}
Bernt Lindstr\"om.
\newblock On $B_2$-sequences of vectors.
\newblock \href{http://dx.doi.org/10.1016/0022-314X(72)90052-2}{\em J.\
Number Theory}, 4(3):261--265, 1972.

\bibitem{LP}
Hong Liu and P\'eter P\'al Pach.
\newblock The number of multiplicative Sidon sets of integers.
\newblock \href{http://dx.doi.org/10.1016/j.jcta.2019.02.002}{\em J.\
Combin.\ Th.\ Ser.\ A}, 165:152--175, 2019.

\bibitem{Ru}
Imre Z.~Rusza.
\newblock Additive and multiplicative Sidon sets.
\newblock \href{http://dx.doi.org/10.1007/s10474-006-0102-0}{\em Acta
Math.\ Hungarica}, 112:345--354, 2006.

\bibitem{Singer}
James Singer.
\newblock A theorem in finite projective geometry and some applications
to number theory.
\newblock \href{http://dx.doi.org/10.1090/S0002-9947-1938-1501951-4}{\em
Trans.\ Amer.\ Math.\ Soc.}, 43(3):377--385, 1938.

\bibitem{ST}
Ernst Snapper and Robert J.~Troyer.
\newblock \href{http://dx.doi.org/10.1016/C2013-0-11513-9}{\em Metric
affine geometry}.
\newblock Academic Press, New York, London, 1971.

\bibitem{T}
Terence Tao.
\newblock A counterexample to the periodic tiling conjecture.
\newblock Blogpost, available at
\href{https://terrytao.wordpress.com/2022/11/29/}{https://terrytao.wordpress.com/2022/11/29/},
2022.

\bibitem{TZ}
Terence Tao and Tamar Ziegler.
\newblock Concatenation theorems for anti-Gowers-uniform functions and
Host--Kra characteristic factors.
\newblock \href{http://dx.doi.org/10.19086/da.850}{\em Discrete Anal.},
paper no.\ 13, 61 pp., 2016.

\bibitem{vS}
Karl Georg Christian von Staudt.
\newblock {\em Beitr\"age zur Geometrie der Lage}, Vol.\ I--II.
\newblock N\"urnberg, 1850s.
\end{thebibliography}
\end{document}